\newcommand{\ud}{\mathrm{d}}
\renewcommand{\d}{\mathrm{d}}
\renewcommand{\epsilon}{\varepsilon}
\renewcommand{\P}{\mathbb{P}}
\newcommand{\E}{\mathbb{E}}
\newcommand{\R}{\mathbb{R}}
\begin{document}


\theoremstyle{definition}
\newtheorem{dfn}{Definition}
\newtheorem{ass}{Assumption}
\theoremstyle{plain}
\newtheorem{thm}{Theorem}
\newtheorem{pro}{Proposition}
\newtheorem{cor}{Corollary}
\newtheorem{lmm}{Lemma}
\theoremstyle{remark}
\newtheorem{rem}{Remark}
\newtheorem{exa}{Example}



 
\title[H\"older Continuity of Gaussian Processes]{Necessary and Sufficient Conditions for H\"older Continuity of Gaussian Processes}
\date{\today}

\author[Azmoodeh]{Ehsan Azmoodeh}
\address{Ehsan Azmoodeh\\ Facult\'e des Sciences, de la Technologie et de la Communication,  Universit\'e du Luxembourg\\
P.O. Box L-1359, LUXEMBOURG} 

\author[Sottinen]{Tommi Sottinen}
\address{Tommi Sottinen\\ Department of Mathematics and Statistics\\ University of Vaasa\\ P.O. Box 700\\ FIN-65101 Vaasa\\ FINLAND}

\author[Viitasaari]{Lauri Viitasaari}
\address{Lauri Viitasaari\\ Department of Mathematics and System Analysis, Aalto University School of Science, Helsinki\\
P.O. Box 11100, FIN-00076 Aalto,  FINLAND} 

\author[Yazigi]{Adil Yazigi}
\address{Adil Yazigi\\ Department of Mathematics and Statistics\\ University of Vaasa\\ P.O. Box 700\\ FIN-65101 Vaasa\\ FINLAND}

\begin{abstract}
The continuity of Gaussian processes is extensively studied topic and it culminates in the Talagrand's notion of majorizing measures that gives complicated necessary and sufficient conditions.  In this note we study the H\"older continuity of Gaussian processes.  It turns out that necessary and sufficient conditions can be stated in a simple form that is a variant of the celebrated Kolmogorov--\v{C}entsov condition.
\end{abstract}

\thanks{A. Yazigi was funded by the Finnish Doctoral Programme
in Stochastics and Statistics.}

\keywords{Gaussian processes,
H\"older continuity,
Kolmogorov--\v{C}entsov condition,
self-similar processes.
}

\subjclass[2010]{60G15, 60G17, 60G18}

\maketitle


 
\section{Introduction}

In what follows $X$ will always be a centered Gaussian process on the interval $[0,T]$. For a centered Gaussian family $\xi=(\xi_\tau)_{\tau\in \mathbb{T}}$ we denote
\begin{eqnarray*}
d_\xi^2(\tau,\tau') &:=& \E[(\xi_\tau- \xi_{\tau'})^2], \\
\sigma^2_\xi(\tau) &:=& \E[\xi_\tau^2].
\end{eqnarray*}

To put our result in context, we briefly recall the essential results of Gaussian continuity. 

One of the earliest results is a sufficient condition due to Fernique \cite{Fernique}:
\emph{Assume that for some positive $\epsilon$, and $0 \le s \le t \le \epsilon$, there exists a nondecreasing function $\Psi$ on $[0,\epsilon]$ such that $\sigma^{2}_X(s,t) \le \Psi^2(t-s)$ and 
\begin{equation}\label{Fernique-integral}
 \int_0^\epsilon \frac{\Psi(u)}{u \sqrt{\log u}} \,\d u < \infty.
\end{equation}
Then $X$ is continuous.}
The finiteness of Fernique integral (\ref{Fernique-integral}) is not  necessary for the continuity. Indeed, cf. \cite[Sect. 5]{Marcus-Shepp} for a counter-example.

Dudley \cite{Dudley1, Dudley2} found a sufficient condition for the continuity by using \emph{metric entropy}. Let $N(\epsilon):=N([0,T],d_X,\epsilon)$ denote the minimum number of closed balls of radius $\epsilon$ in the (pseudo) metric $d_X$ needed to cover $[0,T]$. 
\emph{If 
\begin{equation}\label{dudley-integral}
\int_{0}^\infty \sqrt{\log N(\epsilon)} \,\ud \epsilon < \infty,
\end{equation}
then $X$ is continuous.}
Like in the case of the Fernique's condition, the finiteness of the Dudley integral (\ref{dudley-integral}) is not necessary for continuity, cf. \cite[Ch 6.]{marcus-rosen}. However, for stationary processes (\ref{dudley-integral}) is necessary and sufficient.

Finally, necessary and sufficient conditions were obtained by Talagrand \cite{talagrand}.  Denote $B_{d_X}(t,\epsilon)$ a ball with radius $\epsilon$ at center $t$ in the metric $d_X$. A probability measure $\mu$ on $([0,T],d_X)$ is called a \emph{majorizing measure} if 
\begin{equation}\label{Talagrand-condition}
\sup_{t \in [0,T]} \int_{0}^{\infty} \sqrt{\log \frac{1}{\mu\left(B_{d_X}(t,\epsilon)\right)}} \,\ud \epsilon < \infty.
\end{equation}
\emph{The Gaussian process $X$ is continuous if and only if there exists a majorizing measure $\mu$ on $([0,T],d_X)$ such that
$$
\lim_{\delta \to 0} \sup_{t \in [0,T]} \int_{0}^{\delta} \sqrt{\log \frac{1}{\mu(B_{d_X}(t,\epsilon))}} \,\ud \epsilon = 0.
$$
}

\section{Main Theorem}

The Talagrand's necessary and sufficient condition (\ref{Talagrand-condition}) for the continuity of a Gaussian process is rather complicated. In contrast, the general Kolmogorov--\v{C}entsov condition for continuity is very simple. It turns out that for Gaussian processes the Kolmogorov--\v{C}entsov condition is very close to being necessary for H\"older continuity: 

\begin{thm}\label{thm:holder}
The Gaussian process $X$ is H\"older continuous of any order $a<H$ i.e.
\begin{equation}\label{Holder}
|X_t - X_s| \leq C_\epsilon |t-s|^{H-\epsilon}, \quad \mbox{ for all } \epsilon>0
\end{equation}
if and only if there exists constants $c_\epsilon$ such that 
\begin{equation}
\label{var-estimate}
d_X(t,s) \le c_\epsilon |t-s|^{H-\epsilon}, \quad \mbox{ for all } \epsilon>0.
\end{equation}

Moreover, the random variables $C_\epsilon$ in (\ref{Holder}) satisfy
\begin{equation}
\label{H-moment}
\E\left[\exp\left(aC_\epsilon^\kappa\right)\right]<\infty
\end{equation}
for any constants $a\in\R$ and $\kappa<2$; and also for $\kappa=2$ for small enough positive $a$. In particular, the moments of all orders of $C_\epsilon$ are finite.
\end{thm}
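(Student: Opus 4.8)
The plan is to prove the two implications separately, and then to read off the exponential moment bound \((\ref{H-moment})\) from the Borell--Tsirelson--Ibragimov--Sudakov (Borell--TIS) concentration inequality applied to an appropriate centered Gaussian family. The necessity of \((\ref{var-estimate})\) is elementary: assume \((\ref{Holder})\), fix \(\epsilon>0\) and \(s\neq t\), and set \(Z:=|X_t-X_s|\,|t-s|^{-(H-\epsilon)}\), which is the absolute value of a centered Gaussian variable with variance \(\sigma^2=d_X(t,s)^2|t-s|^{-2(H-\epsilon)}\). Since \(Z\le C_\epsilon\) a.s.\ and \(C_\epsilon\) is a.s.\ finite, pick a deterministic \(M=M_\epsilon\) with \(\P(C_\epsilon\le M)\ge\tfrac12\); then \(\P(Z\le M)\ge\tfrac12\), which forces \(\sigma\le M/q\) with \(q=\Phi^{-1}(3/4)\), i.e.\ \((\ref{var-estimate})\) holds with \(c_\epsilon=M_\epsilon/q\), uniformly in \(s,t\).

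For the converse, assume \((\ref{var-estimate})\). The Gaussian moment identity \(\E[|X_t-X_s|^p]=c_p\,d_X(t,s)^p\) combined with \((\ref{var-estimate})\) yields, for every \(p\ge 1\) and \(\epsilon>0\),
\[
\E[|X_t-X_s|^p]\le c_p\,c_\epsilon^p\,|t-s|^{p(H-\epsilon)} .
\]
For large \(p\) this is a Kolmogorov--\v Centsov bound with H\"older exponent arbitrarily close to \(H\); letting \(p\to\infty\) and \(\epsilon\to 0\) the Kolmogorov--\v Centsov theorem produces a modification \(\tilde X\) (unique up to indistinguishability, being continuous) that is almost surely H\"older continuous of every order \(a<H\). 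In particular, for each fixed \(\epsilon>0\) the optimal H\"older constant
\[
C_\epsilon^{\ast}:=\sup_{s\neq t}\frac{|\tilde X_t-\tilde X_s|}{|t-s|^{H-\epsilon}}
\]
is finite almost surely, and by continuity of \(\tilde X\) the supremum may be taken over the countable set of rational pairs \(s\neq t\) in \([0,T]\).

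Now fix \(\epsilon>0\) and regard \(C_\epsilon^{\ast}\) as the supremum of the countable centered Gaussian family \(\{\pm(\tilde X_t-\tilde X_s)|t-s|^{-(H-\epsilon)}:\,s,t\in[0,T]\cap\mathbb{Q},\ s\neq t\}\). Its variance parameter is
\[
\sigma_\epsilon^{2}:=\sup_{s\neq t}\frac{d_X(t,s)^2}{|t-s|^{2(H-\epsilon)}}\le c_\epsilon^{2}<\infty
\]
by \((\ref{var-estimate})\), and \(C_\epsilon^{\ast}<\infty\) a.s.\ by the previous step; hence Borell--TIS gives \(\E[C_\epsilon^{\ast}]<\infty\) and \(\P(|C_\epsilon^{\ast}-\E C_\epsilon^{\ast}|>u)\le 2\exp(-u^2/2\sigma_\epsilon^2)\) for all \(u>0\). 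Taking \(C_\epsilon:=C_\epsilon^{\ast}\) in \((\ref{Holder})\), the estimate \((\ref{H-moment})\) follows by routine tail integration: for \(\kappa<2\) the Gaussian tail dominates \(e^{a x^\kappa}\) for every \(a\in\R\) (and the case \(a\le 0\) is trivial), while for \(\kappa=2\) one uses \(e^{a x^2}\le e^{2a(\E C_\epsilon^{\ast})^2}e^{2a(x-\E C_\epsilon^{\ast})^2}\) to reduce to \(\E\exp(2a(C_\epsilon^{\ast}-\E C_\epsilon^{\ast})^2)<\infty\), which holds whenever \(2a<1/2\sigma_\epsilon^{2}\); finiteness of all moments is the case \(\kappa=1\).

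The only genuine work is in the converse direction, and it is organisational rather than deep: one must (i) exhibit a single continuous modification whose \emph{optimal} H\"older constant is a.s.\ finite, which is where Kolmogorov--\v Centsov (or, for a self-contained route, the Garsia--Rodemich--Rumsey inequality, at the cost of messier constants) is used, and (ii) recognise this constant as the supremum of a countable centered Gaussian family with finite variance parameter --- the latter being exactly what \((\ref{var-estimate})\) provides. Once both points are in place, the concentration inequality does all the remaining work and \((\ref{H-moment})\) is immediate. I expect step (i), namely passing from the moment bounds to a.s.\ finiteness of \(C_\epsilon^{\ast}\) (as opposed to merely the existence of \emph{some} a.s.\ finite H\"older constant), to be the point requiring the most care.
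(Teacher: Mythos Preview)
Your proof is correct. The equivalence part matches the paper's approach exactly: Kolmogorov--\v{C}entsov for the ``if'' direction, and for the ``only if'' direction the paper proves an elementary lemma (their Lemma~1) stating that an almost surely bounded centered Gaussian family has uniformly bounded variances --- your quantile argument with $M_\epsilon$ and $\Phi^{-1}(3/4)$ is a minor variant of the same observation.

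The genuine difference is in how you obtain the exponential moment bound~(\ref{H-moment}). The paper uses the Garsia--Rademich--Rumsey inequality to realize $C_\epsilon$ explicitly as a constant times a double integral, then bounds all $L^p$ norms of that integral via Minkowski and the Gaussian moment formula, obtaining $\E[|C_\epsilon|^p]\le c^p\,\Gamma((p+1)/2)$, and finally plugs this into the exponential series and uses Stirling's approximation to check convergence. Your route via Borell--TIS is shorter and more conceptual: once $C_\epsilon^\ast$ is recognized as the supremum of a centered Gaussian family with finite variance parameter (exactly what~(\ref{var-estimate}) says), the sub-Gaussian concentration bound yields~(\ref{H-moment}) by direct tail integration, with the threshold for $\kappa=2$ given by $a<1/(4\sigma_\epsilon^2)$. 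What the paper's approach buys is an explicit handle on the H\"older constant as a concrete functional of the path; what yours buys is a cleaner argument with essentially no computation, and a transparent identification of the sharp sub-Gaussian variance parameter $\sigma_\epsilon^2=\sup_{s\neq t} d_X(t,s)^2/|t-s|^{2(H-\epsilon)}$. Your worry about step~(i) is unfounded: Kolmogorov--\v{C}entsov applied with exponent $p$ large and $\epsilon'<\epsilon$ gives a modification that is a.s.\ H\"older of order $H-\epsilon$, so the \emph{optimal} constant $C_\epsilon^\ast$ is a.s.\ finite, not merely some constant.
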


The differences between the classical Kolmogorov--\v{C}entsov continuity criterion and Theorem \ref{thm:holder} are: (i) Theorem \ref{thm:holder} deals only with Gaussian processes, (ii) there is an $\epsilon$-gap to the classical Kolmogorov--\v{C}entsov condition and (iii) as a bonus we obtain that the H\"older constants $C_\epsilon$ must have light tails by the estimate (\ref{H-moment}). Note that the $\epsilon$-gap cannot be closed.  Indeed, let 
$$
X_t = f(t)B_t,
$$
where $B$ is the fractional Brownian motion with Hurst index $H$ and $f(t) = (\log\log 1/t)^{-1/2}$. Then, by the law of the iterated logarithm due to Arcones \cite{Arcones}, $X$ is H\"older continuous of any order $a<H$, but (\ref{var-estimate}) does not hold without an $\epsilon>0$. 

The proof of the first part Theorem \ref{thm:holder} is based on the classical Kolmogorov--\v{C}entsov continuity criterion and the following elementary lemma:

\begin{lmm}\label{lmm:holder}
Let $\xi=(\xi_\tau)_{\tau\in\mathbb{T}}$ be a centered Gaussian family.
If $\sup_{\tau\in\mathbb{T}} |\xi_\tau| < \infty$ then $\sup_{\tau\in\mathbb{T}} \E[\xi_\tau^2]<\infty$. 
\end{lmm}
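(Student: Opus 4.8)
The plan is to prove the contrapositive: if $\sup_{\tau\in\mathbb{T}}\E[\xi_\tau^2]=\infty$, then $\sup_{\tau\in\mathbb{T}}|\xi_\tau|=\infty$ almost surely (or at least with positive probability, which suffices since the event $\{\sup_\tau|\xi_\tau|<\infty\}$ has probability zero or one by a suitable zero-one law, but in fact we only need that it is not almost surely finite). First I would pick a sequence $(\tau_n)_{n\ge 1}\subset\mathbb{T}$ with $\sigma_\xi^2(\tau_n)=\E[\xi_{\tau_n}^2]\to\infty$. Then each $\xi_{\tau_n}$ is a centered Gaussian random variable with variance $\sigma_n^2:=\sigma_\xi^2(\tau_n)\to\infty$, so for any fixed level $M>0$,
\begin{equation*}
\P\left(|\xi_{\tau_n}|>M\right)=\P\left(|Z|>M/\sigma_n\right)\to 1\quad\text{as }n\to\infty,
\end{equation*}
where $Z$ is standard normal. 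Hence $\P\left(\sup_n|\xi_{\tau_n}|>M\right)=1$ for every $M$ (the event contains $\{|\xi_{\tau_n}|>M\}$ for each $n$, and these have probabilities tending to $1$, so their union has probability $1$). Intersecting over $M\in\N$ gives $\P\left(\sup_n|\xi_{\tau_n}|=\infty\right)=1$, and a fortiori $\sup_{\tau\in\mathbb{T}}|\xi_\tau|=\infty$ on this event, contradicting $\sup_{\tau\in\mathbb{T}}|\xi_\tau|<\infty$.

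The one point that needs a little care is the passage from ``each $\P(|\xi_{\tau_n}|>M)$ is close to $1$'' to ``$\P(\sup_n|\xi_{\tau_n}|>M)=1$'': this is immediate because $\{\sup_n|\xi_{\tau_n}|>M\}\supseteq\{|\xi_{\tau_n}|>M\}$ for every single $n$, so the probability of the left-hand event is at least $\sup_n\P(|\xi_{\tau_n}|>M)=1$. No independence is required. Alternatively, and even more simply, one can avoid almost-sure statements entirely: if $\sup_\tau|\xi_\tau|<\infty$ held as a hypothesis on a process, then in particular there is a deterministic $M$ and an event of probability, say, $\ge 1/2$ on which $|\xi_\tau|\le M$ for all $\tau$; but $\P(|\xi_{\tau_n}|\le M)\to 0$, a contradiction for $n$ large. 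I would present whichever version reads most cleanly, probably the latter.

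The main (and really the only) obstacle is purely a matter of correctly interpreting the hypothesis ``$\sup_{\tau\in\mathbb{T}}|\xi_\tau|<\infty$'': since $\mathbb{T}$ may be uncountable, one should read this as the random variable $\sup_{\tau}|\xi_\tau|$ being finite (almost surely, or at least with positive probability — either reading makes the argument go through), and one must ensure measurability is not an issue, which it is not here because we only ever use the countable sub-supremum $\sup_n|\xi_{\tau_n}|$ as a lower bound. Everything else is the elementary Gaussian tail estimate $\P(|Z|>x)\to 1$ as $x\to 0$, so the proof is short.
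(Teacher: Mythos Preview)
Your argument is correct and is essentially the same as the paper's: both rest on the inequality $\P\bigl(\sup_\tau|\xi_\tau|\le M\bigr)\le \P\bigl(|\xi_{\tau_0}|\le M\bigr)$ for each fixed $\tau_0$, together with the elementary Gaussian fact that the right-hand side tends to $0$ as $\sigma_\xi(\tau_0)\to\infty$. The paper runs this directly rather than by contrapositive, using the density bound $\P(|\xi_\tau|<x)\le \sqrt{2/\pi}\,x/\sigma_\xi(\tau)$ to extract the explicit quantitative estimate $\sigma_\xi^2(\tau)\le 2x^2\big/\bigl(\pi\,\P[\sup_\tau|\xi_\tau|<x]^2\bigr)$, whereas your version is purely qualitative; but the underlying idea is identical.
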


\begin{proof}
Since $\sup_{\tau\in\mathbb{T}} |\xi_\tau|<\infty$, $\P[\sup_{\tau\in\mathbb{T}} |\xi_\tau|<x]>0$ for a large enough $x\in\R$. Now, for all $\tau\in\mathbb{T}$, we have that
\begin{eqnarray*}
\P\left[\sup_{\tau\in\mathbb{T}} \left|\xi_\tau\right|<x\right]
&\le& \P\left[\left|\xi_\tau\right| < x\right] \\
&=& \P\left[\left|\frac{\xi_\tau}{\sigma_\xi(\tau)}\right| < \frac{x}{\sigma_\xi(\tau)}\right] \\
&=& \frac{2}{\sqrt{2\pi}}\int_{0}^{x/\sigma_\xi(\tau)} e^{-\frac12 z^2}\, \d z \\
&\le& \frac{2}{\sqrt{2\pi}} \frac{x}{\sigma_\xi(\tau)}.
\end{eqnarray*}
Consequently,
$$
\sigma_\xi^2(\tau) \le \frac{2x^2}{\pi \P\left[\sup_{\tau\in\mathbb{T}} \left|\xi_\tau\right|<x\right]^2},
$$
and the claim follows from this.
\end{proof}

The second part on the exponential moments of the H\"older constants of Theorem \ref{thm:holder} follows from the following Garsia--Rademich--Rumsey inequality \cite{Garsia-Rademich-Rumsey}. Let us also note, that this part is intimately connected to the Fernique's theorem \cite{Fernique2} on the continuity of Gaussian processes.

\begin{lmm}
\label{lma:GRR}
Let $p\geq 1$ and $\alpha>\frac{1}{p}$. Then there exists a constant $c=c_{\alpha,p}>0$ such that for any $f\in C([0,T])$ and for all $0\leq s,t\leq T$ we have
$$
{|f(t)-f(s)|}^p \leq c T^{\alpha p -1}|t-s|^{\alpha p  -1}\int_0^T\!\!\int_0^T \frac{|f(x)-f(y)|^p}{|x-y|^{\alpha p +1}}\,\d x\d y.
$$
\end{lmm}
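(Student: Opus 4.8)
The plan is to prove the inequality in the classical way, fixing $s,t$ and extracting a pointwise bound on $|f(t)-f(s)|$ from the finiteness of the double integral $B := \int_0^T\!\int_0^T |f(x)-f(y)|^p |x-y|^{-\alpha p-1}\,\d x\,\d y$. First I would reduce to the case $s<t$ and, after the affine change of variables sending $[s,t]$ to a reference interval, observe that it suffices to control $|f(t)-f(s)|$ by a telescoping sum over a sequence of nested points $t_0=t, t_1, t_2, \dots$ converging to a suitably chosen point, together with the symmetric construction from $s$. Concretely, I would build a decreasing sequence $(t_n)$ with $t_0 = t$ such that each $t_{n+1}$ lies in an interval $I_n$ where both the one-variable function $x \mapsto \int_0^T |f(x)-f(y)|^p|x-y|^{-\alpha p-1}\,\d y$ and the "local" contribution $x \mapsto |f(x)-f(t_n)|^p |x-t_n|^{-\alpha p-1}$ are simultaneously no larger than twice their averages over $I_n$ (an averaging/pigeonhole step, valid since the set where either exceeds twice its average has measure at most half of $|I_n|$).

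The key steps, in order, are: (1) set up the dyadic-type sequence $|t_n - t| \sim 2^{-n}|t-s|$ and use the averaging argument to choose each $t_{n+1}$; (2) derive from the two averaging bounds an estimate of the form $|f(t_n) - f(t_{n+1})|^p \lesssim |I_n|^{\alpha p - 1}\cdot(\text{average of the }y\text{-integral over }I_{n-1})$, i.e. $|f(t_n)-f(t_{n+1})| \lesssim |t-s|^{\alpha - 1/p}\, 2^{-n(\alpha - 1/p)} B_n^{1/p}$ where $B_n$ is a piece of $B$; (3) sum over $n$ — here the hypothesis $\alpha > 1/p$ makes the geometric series $\sum_n 2^{-n(\alpha-1/p)}$ converge, which is exactly where that assumption is used — to get $|f(t) - f(\ell)| \lesssim |t-s|^{\alpha-1/p} B^{1/p}$ for the limit point $\ell$; (4) run the mirror-image construction from $s$, and combine via the triangle inequality; (5) raise to the $p$-th power and restore the $T$-dependent prefactor $T^{\alpha p - 1}$ that tracks the normalization of the interval relative to $[0,T]$. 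Continuity of $f$ is used only to guarantee that $f(t_n) \to f(\ell)$ and that all the integrals in sight are finite.

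The main obstacle I expect is the bookkeeping in step (2)–(3): one must choose the comparison intervals $I_n$ so that the averaging argument can be applied to \emph{two} functions at once at each stage, keep the measures $|I_n|$ geometrically decaying, and make sure the exponents of $|I_n|$ and $|t-s|$ track correctly through the telescoping sum so that the final power is exactly $\alpha - 1/p$ (equivalently $\alpha p - 1$ after taking $p$-th powers). Getting the constant $c_{\alpha,p}$ to depend only on $\alpha$ and $p$ — and not on $f$, $s$, $t$, or $T$ — requires carrying out the normalization in step (5) cleanly; everything else is a routine, if slightly tedious, iteration.
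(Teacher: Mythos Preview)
The paper does not give its own proof of this lemma: it is simply quoted from \cite{Garsia-Rademich-Rumsey} and then applied in the proof of Theorem~\ref{thm:holder}. So there is nothing to compare against on the paper's side.

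Your plan is the classical Garsia--Rademich--Rumsey argument and is correct in outline. The averaging/pigeonhole selection of the nested points $t_n$ (so that two nonnegative functions are simultaneously at most a fixed multiple of their averages on $I_n$), the telescoping estimate producing the factor $|t-s|^{\alpha-1/p}2^{-n(\alpha-1/p)}$, the use of $\alpha>1/p$ to sum the geometric series, and the symmetric construction from $s$ are exactly the ingredients of the original proof. Two small remarks. First, the argument as you sketch it actually yields the sharper bound $|f(t)-f(s)|^p\le c_{\alpha,p}\,|t-s|^{\alpha p-1}B$ with no $T$-factor; a scaling $g(u)=f(Tu)$ shows this is equivalent to the $[0,1]$ case, so the extra $T^{\alpha p-1}$ in the stated lemma is harmless (and redundant) rather than something you need to ``restore''. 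Second, in step~(1) you should start not from $t_0=t$ but from a first good point $t_0\in(s,t)$ chosen by averaging the one-variable integral $I(\cdot)$; the passage to the endpoints $s,t$ is then made at the very end via the continuity of $f$, once the $t_n$ accumulate there.
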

\begin{proof}[Proof of Theorem \ref{thm:holder}]
The if part follows from the Kolmogorov--\v{C}entsov continuity criterion. For the only-if part assume that $X$ is H\"older continuous of order $a=H-\epsilon$, i.e.
$$
\sup_{t,s\in[0,T]} \frac{\left|X_t-X_s\right|}{|t-s|^{H-\epsilon}} < \infty.
$$
Define a family $\xi = (\xi_{t,s})_{(t,s)\in [0,T]^2}$ by setting
$$
\xi_{t,s} = \frac{X_t-X_s}{|t-s|^{H-\epsilon}}.
$$
Since $\xi$ is a centered Gaussian family that is bounded by the H\"older continuity of $X$, we obtain, by Lemma \ref{lmm:holder}, that $\sup_{(t,s)\in [0,T]^2} \sigma_\xi^2(t,s) < \infty$. This means that 
$$
\sup_{t,s\in [0,T]}\frac{d_X^2(t,s)}{|t-s|^{2H-2\epsilon}} < \infty,
$$
or
$$
d_X(t,s) \le C_\epsilon|t-s|^{H-\epsilon}.
$$

The property (\ref{H-moment}) follow from the Garsia--Rademich--Rumsey inequality of Lemma \ref{lma:GRR}. Indeed, by choosing $\alpha = H - \frac{\epsilon}{2}$ and $p=\frac{2}{\epsilon}$ we obtain
$$
|X_t - X_s| \leq c_{H,\epsilon} T^{H-\epsilon}|t-s|^{H-\epsilon}\xi,
$$
where
\begin{equation}
\label{H-RV}
\xi = \left(\int_0^T\!\!\int_0^T \frac{|X_u - X_v|^{\frac{2}{\epsilon}}}{|u-v|^{\frac{2H}{\epsilon}}}\,\d u \d v\right)^{\frac{\epsilon}{2}}.
\end{equation}
Let us first estimate moments of $\xi$. First we recall the fact that for a Gaussian random variable $Z\sim\mathcal{N}(0,\sigma^2)$ and any number $q>0$ we have
$$
\E\left[|Z|^q\right] = \sigma^q \, \frac{2^{\frac{q}{2}}\Gamma\left(\frac{q+1}{2}\right)}{\sqrt{\pi}},
$$
where $\Gamma$ denotes the Gamma function. 
Let now $\delta<\frac{\epsilon}{2}$ and $p\geq \frac{2}{\epsilon}$. By Minkowski inequality and estimate (\ref{var-estimate}) we obtain
\begin{eqnarray*}
\E\left[|\xi|^p\right] &\leq& \left(\int_0^T\!\!\int_0^T \frac{\left(\E|X_u - X_v|^p\right)^{\frac{2}{p\epsilon}}}{|u-v|^{\frac{2H}{\epsilon}}}\d v\d u\right)^{\frac{p\epsilon}{2}}\\
&\leq& \left(\int_0^T\!\!\int_0^T \frac{\left(c_p c_\delta|u-v|^{p(H-\delta)}\right)^{\frac{2}{p\epsilon}}}{|u-v|^{\frac{2H}{\epsilon}}}\,\d v\d u\right)^{\frac{p\epsilon}{2}}\\
&=&c_p c_\delta 2^{\frac{p\epsilon}{2}}\left(\int_0^T\int_0^u (u-v)^{-\frac{2\delta}{\epsilon}}\d v\d u\right)^{\frac{p\epsilon}{2}}\\
&=& c_p c_\delta 2^{\frac{p\epsilon}{2}}\left(\frac{\epsilon}{2\delta}\right)^{\frac{p\epsilon}{2}}\left(1-\frac{\epsilon}{2\delta}\right)^{\frac{p\epsilon}{2}}T^{q(\epsilon-\delta)},
\end{eqnarray*}
where $c_\delta$ is the constant from (\ref{var-estimate}) and
\begin{equation}
\label{q-constant}
c_q =\frac{2^{\frac{q}{2}}\Gamma\left(\frac{q+1}{2}\right)}{\sqrt{\pi}}.
\end{equation}
Hence, we may take 
$$
C_\epsilon = c_{H,\epsilon}T^{H-\epsilon}\xi,
$$
where $c_{H,\epsilon}$ is the constant from Garsia--Rademich--Rumsey inequality and $\xi$ is given by (\ref{H-RV}). Moreover, for any $p\geq \frac{2}{\epsilon}$ and any $\delta<\frac{\epsilon}{2}$ we have estimate
$$
\E\left[|\xi|^p\right] \leq c_p c_\delta 2^{\frac{p\epsilon}{2}}\left(\frac{\epsilon}{2\delta}\right)^{\frac{p\epsilon}{2}}\left(1-\frac{\epsilon}{2\delta}\right)^{\frac{p\epsilon}{2}}T^{q(\epsilon-\delta)}.
$$
Consequently, 
$$
\E\left[|C_\epsilon|^p\right] \leq c^p \Gamma\left(\frac{p+1}{2}\right)
$$
for some constant $c=c_{\epsilon,\delta,T}$. Thus, by plugging in (\ref{q-constant}) to the series expansion of the exponential we obtain
$$
\E\left[\exp\left(aC_\epsilon^\kappa\right)\right]
\le 
\sum_{j=0}^\infty a^jc^{\kappa j}\frac{\Gamma\left(\frac{\kappa j+1}{2}\right)}{\Gamma\left(j+1\right)}. 
$$
So, to finish the proof we need to show that the series above converges.
Now, by the Stirling's approximation 
$$
\Gamma(z) = \frac{\sqrt{2\pi}}{\sqrt{z}}\left(\frac{z}{e}\right)^z\left(1+O(1/z)\right),
$$
we obtain (the constant $c$ may vary from line to line)
\begin{eqnarray*}
\frac{\Gamma\left(\frac{\kappa j+1}{2}\right)}{\Gamma\left(j+1\right)} 
&\sim& 
\frac{\left(\frac{\kappa j+1}{2}\right)^{-\frac{1}{2}}e^{-\frac{\kappa j +1}{2}}\left(\frac{\kappa j +1}{2}\right)^{\frac{\kappa j +1}{2}}}{\left(j+1\right)^{-\frac{1}{2}}e^{-j-1}\left(j+1\right)^{j+1}}\\
&\le& 
c^j \frac{1}{\sqrt{j+1}}\frac{\left(\kappa j +1\right)^{\frac{\kappa j}{2}}}{\left(j+1\right)^{j}}\\
&\le& 
c^j \frac{1}{\sqrt{j+1}}\frac{\left(2j +2\right)^{\frac{\kappa j}{2}}}{\left(j+1\right)^{j}}\\
&=& 
(2c)^j\frac{1}{\sqrt{j+1}} (j+1)^{\left(\frac{\kappa}{2}-1\right)j}
\end{eqnarray*}
which is clearly summable since $\kappa < 2$. If $\kappa=2$, then in the approximation above we obtain that 
$
\Gamma\left(\frac{2 j+1}{2}\right)/\Gamma\left(j+1\right) \sim c^j
$
for some constant $c$. Hence, depending on constant $c_{\epsilon,\delta,T}$, we obtain that
$
\E\left[\exp\left(aC_\epsilon^2\right)\right] < \infty
$
for small enough $a>0$.
\end{proof}

\section{Applications and Examples}

\subsection*{Stationary-Increment Processes} 

This case is simple:

\begin{cor}\label{cor:holder-si}
If $X$ has stationary increments then it is H\"older continuous of any order $a<H$ if and only if
$$
\sigma^2_X(t) \le c_\epsilon t^{2H-\epsilon}, \quad \mbox{for all } \epsilon>0.
$$
\end{cor}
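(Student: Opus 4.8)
The plan is to reduce the corollary directly to Theorem~\ref{thm:holder} by identifying the canonical metric $d_X$ explicitly in terms of the variance function $\sigma_X^2$. First I would fix the normalization: since $X$ is a centered Gaussian process with stationary increments, I may assume $X_0=0$ almost surely — replacing $X$ by $X-X_0$ affects neither the increments nor the Hölder regularity in (\ref{Holder}). Under this normalization, stationarity of the increments gives, for $0\le s\le t\le T$,
$$
d_X^2(t,s) = \E\!\left[(X_t-X_s)^2\right] = \E\!\left[(X_{t-s}-X_0)^2\right] = \E\!\left[X_{t-s}^2\right] = \sigma_X^2(t-s).
$$

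With this identity, the two conditions of Theorem~\ref{thm:holder} become one another. Condition (\ref{var-estimate}), i.e. $d_X(t,s)\le c_\epsilon|t-s|^{H-\epsilon}$ for all $\epsilon>0$, reads $\sigma_X^2(u)\le c_\epsilon^2\,u^{2H-2\epsilon}$ for all $u\in[0,T]$ and all $\epsilon>0$; after the harmless relabeling $\epsilon\mapsto\epsilon/2$ and renaming the constant, this is exactly $\sigma_X^2(t)\le c_\epsilon t^{2H-\epsilon}$ for all $\epsilon>0$, and the substitution is reversible. Hence, by Theorem~\ref{thm:holder}, $X$ is Hölder continuous of every order $a<H$ if and only if the stated variance bound holds.

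I do not anticipate any genuine obstacle here: the corollary is essentially a transcription of Theorem~\ref{thm:holder} once the metric is computed. The only points deserving a word of care are the treatment of $X_0$ (dealt with by the normalization above) and the bookkeeping of the factor $2$ between the exponent appearing in $d_X$ and the one appearing in $\sigma_X^2$.
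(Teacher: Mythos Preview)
Your proof is correct and is precisely the argument the paper has in mind; the paper itself gives no proof, only the remark ``This case is simple'' preceding the corollary. The identification $d_X^2(t,s)=\sigma_X^2(t-s)$ under the normalization $X_0=0$ reduces the statement immediately to Theorem~\ref{thm:holder}, and your handling of the normalization and of the factor-of-two bookkeeping in the exponent is exactly what is needed.
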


\subsection*{Stationary Processes}

For a stationary process $\E[X_tX_s]=r(t-s)$, where, by the Bochner's theorem, 
$$
r(t)= \int_{-\infty}^{\infty} e^{i \lambda t} \,\Delta(\ud \lambda),
$$
where $\Delta$, the \emph{spectral measure} of $X$, is finite and symmetric.
Since now
$$
d_X^2(t,s) = 2\big(r(0)-r(t-s)\big)
$$
we have the following corollary.

\begin{cor}
If $X$ is stationary with spectral measure $\Delta$ then it is H\"older continuous of any order $a <H$ if and only if
$$
\int_0^\infty \big(1-\cos(\lambda t)\big) \,\Delta(\d\lambda) \le c_\epsilon t^{2H-\epsilon}
\quad \mbox{ for all } \epsilon>0.
$$
\end{cor}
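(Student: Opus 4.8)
The plan is to read off the corollary directly from Theorem~\ref{thm:holder} together with the two identities recorded just above its statement. By Theorem~\ref{thm:holder}, $X$ is H\"older continuous of every order $a<H$ if and only if there are constants $c_\epsilon$ with $d_X(t,s)\le c_\epsilon|t-s|^{H-\epsilon}$ for all $\epsilon>0$. Squaring, this is equivalent to the existence of constants (still denoted $c_\epsilon$) with $d_X^2(t,s)\le c_\epsilon|t-s|^{2H-2\epsilon}$ for all $\epsilon>0$; and since the quantifier ranges over all $\epsilon>0$, replacing $2\epsilon$ by $\epsilon$ shows this is in turn equivalent to $d_X^2(t,s)\le c_\epsilon|t-s|^{2H-\epsilon}$ for all $\epsilon>0$.

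Next I would substitute the spectral representation of $d_X^2$. For a stationary $X$ one has $d_X^2(t,s)=2\big(r(0)-r(t-s)\big)$, and since $\Delta$ is finite and symmetric, Bochner's theorem gives
$$
r(0)-r(u)=\int_{-\infty}^{\infty}\big(1-\cos(\lambda u)\big)\,\Delta(\d\lambda)=2\int_{0}^{\infty}\big(1-\cos(\lambda u)\big)\,\Delta(\d\lambda),
$$
the last equality because the integrand vanishes at $\lambda=0$ and is even in $\lambda$ while $\Delta$ is symmetric. Hence $d_X^2(t,s)=4\int_{0}^{\infty}\big(1-\cos(\lambda(t-s))\big)\,\Delta(\d\lambda)$, a quantity depending on $t-s$ only. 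Writing $t$ for the increment $t-s$ and absorbing the harmless constant $4$ into $c_\epsilon$, the estimate $d_X^2(t,s)\le c_\epsilon|t-s|^{2H-\epsilon}$ becomes exactly $\int_{0}^{\infty}\big(1-\cos(\lambda t)\big)\,\Delta(\d\lambda)\le c_\epsilon t^{2H-\epsilon}$ for all $\epsilon>0$, which is the claimed condition.

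There is no real obstacle here: the corollary is a transcription of Theorem~\ref{thm:holder} into spectral language, and the only points meriting a word of care are book-keeping ones, namely that the multiplicative constants (the $2$ coming from $d_X^2=2(r(0)-r)$ and the further $2$ from folding the spectral integral onto $(0,\infty)$) are absorbed into the arbitrary constants $c_\epsilon$, and that the cosmetic passage of the $\epsilon$-gap from $2\epsilon$ to $\epsilon$ is legitimate precisely because both conditions are asserted for every $\epsilon>0$.
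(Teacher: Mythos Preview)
Your proposal is correct and follows exactly the route the paper intends: the corollary is stated immediately after the identity $d_X^2(t,s)=2\bigl(r(0)-r(t-s)\bigr)$ with no explicit proof, so your argument is precisely the transcription the authors leave to the reader. Your handling of the bookkeeping points (the $2\epsilon\leftrightarrow\epsilon$ relabeling, the fold onto $(0,\infty)$ via symmetry of $\Delta$, and the absorption of the constant $4$) is accurate.
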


\subsection*{Fredholm Processes}

A bounded process can be viewed as an $L^2([0,T])$-valued random variable.  Hence, the covariance operator admits a square root with kernel $K$, and we may represent $X$ as a \emph{Gaussian Fredholm process}:
\begin{equation}\label{eq:fredholm}
X_t = \int_0^T K(t,s)\, \d W_s,
\end{equation}
where $W$ is a Brownian motion and $K\in L^2([0,T]^2)$.

\begin{cor}\label{cor:fredholm}
A Gaussian process $X$ is H\"older continuous of any order $a<H$ if and only if it admits the representation (\ref{eq:fredholm}) with $K$ satisfying
$$
\int_0^T \left|K(t,u)-K(s,u)\right|^2\, \d u \le c_\epsilon |t-s|^{2H-\epsilon} \quad \mbox{ for all } \epsilon>0.
$$
\end{cor}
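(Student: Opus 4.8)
The plan is to reduce the statement entirely to Theorem~\ref{thm:holder} by means of the It\^o isometry for Wiener integrals. The key elementary observation is that whenever $X$ is given by a representation of the form (\ref{eq:fredholm}), then
\[
d_X^2(t,s) = \E\left[\left(\int_0^T \big(K(t,u)-K(s,u)\big)\,\d W_u\right)^2\right] = \int_0^T \left|K(t,u)-K(s,u)\right|^2\,\d u,
\]
so the kernel bound in the corollary is, up to harmless relabelling of $\epsilon$, exactly the condition (\ref{var-estimate}) on $d_X$: from $\int_0^T|K(t,u)-K(s,u)|^2\,\d u \le c_\epsilon|t-s|^{2H-\epsilon}$ for all $\epsilon>0$ one gets $d_X(t,s)\le c'_\epsilon|t-s|^{H-\epsilon}$ for all $\epsilon>0$, and conversely (squaring turns $2H-\epsilon$ into $2H-2\epsilon$, which is again ``for all $\epsilon$''). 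Granting the representation, both implications then follow at once from Theorem~\ref{thm:holder}; in particular the \emph{if} direction is immediate, since there $K$ is the kernel of the given $X$.

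For the \emph{only if} direction I would first manufacture the representation. H\"older continuity forces a.s.\ continuous, hence a.s.\ bounded, sample paths on the compact interval $[0,T]$, so $\sup_{t\in[0,T]}|X_t|<\infty$ a.s.; Lemma~\ref{lmm:holder} applied to the family $(X_t)_{t\in[0,T]}$ then yields $\sup_{t}\E[X_t^2]<\infty$, whence the covariance operator $R$ on $L^2([0,T])$, $Rf(t)=\int_0^T\E[X_tX_s]f(s)\,\d s$, has finite trace $\int_0^T\E[X_t^2]\,\d t$. Being non-negative, self-adjoint and trace-class, $R$ possesses a non-negative square root $R^{1/2}$, which is Hilbert--Schmidt and therefore an integral operator with kernel $K\in L^2([0,T]^2)$; driving a white noise through $R^{1/2}$ produces a Gaussian process whose covariance is $\int_0^T K(t,u)K(s,u)\,\d u = R(t,s) = \E[X_tX_s]$, i.e.\ the representation (\ref{eq:fredholm}) holds (in law, which suffices, since both H\"older continuity and (\ref{var-estimate}) depend only on the law of $X$). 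With this $K$ the displayed isometry identity holds, Theorem~\ref{thm:holder} gives $d_X(t,s)\le c_\epsilon|t-s|^{H-\epsilon}$, and the asserted kernel bound follows.

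The main obstacle --- essentially the only nontrivial point --- is the construction of the Fredholm kernel together with the upgrade of the identity $d_X^2(t,s)=\int_0^T|K(t,u)-K(s,u)|^2\,\d u$ from \emph{almost every} $(t,s)$, which is automatic from reading $R=R^{1/2}R^{1/2}$ as kernels, to \emph{every} $(t,s)$, so that the pointwise bound on $K$ is even meaningful. I would handle this by using that, under H\"older continuity, the map $(t,s)\mapsto d_X^2(t,s)$ is continuous, so that one can and does select the version of $t\mapsto K(t,\cdot)\in L^2([0,T])$ for which $\|K(t,\cdot)-K(s,\cdot)\|_{L^2}^2=d_X^2(t,s)$ holds for all $(t,s)$. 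In the write-up I would cite the square-root construction as the standard fact already recalled just before the statement rather than reproduce it, and spend the detail on this continuity/measurability point.
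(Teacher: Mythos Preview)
Your approach is correct and is exactly what the paper intends: the corollary is stated without proof, relying on the sentence preceding it (existence of the Fredholm representation via the square root of the covariance operator) together with the It\^o isometry $d_X^2(t,s)=\int_0^T|K(t,u)-K(s,u)|^2\,\d u$, which reduces everything to Theorem~\ref{thm:holder}. Your extra care about boundedness via Lemma~\ref{lmm:holder}, the $\epsilon$-relabelling, and the upgrade from an a.e.\ to an everywhere identity just fills in details the paper leaves implicit.
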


\begin{pro}\label{pro:fredholm}
Let $X$ be Gaussian Fredholm process with kernel $K$. 
\begin{enumerate}
\item
If for every $\epsilon>0$ there exists a function $f_\epsilon\in L^2([0,T])$ such that
$$
|K(t,u)-K(s,u)| \le f_\epsilon(u)|t-s|^{H-\epsilon}
$$
then $X$ is H\"older continuous of any order $a<H$.
\item
If $X$ is H\"older continuous of any order $a<H$ then
$$
f_\epsilon := \liminf_{s\to t} \frac{\left|K(t,\cdot)-K(s,\cdot)\right|^2}{|t-s|^{2H-\epsilon}} \in L^1([0,T])
$$
\end{enumerate}
\end{pro}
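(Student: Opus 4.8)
The plan is to reduce both implications to the intrinsic metric $d_X$ and then invoke Corollary~\ref{cor:fredholm} (equivalently Theorem~\ref{thm:holder}). The common input is the It\^o isometry: from $X_t-X_s=\int_0^T\big(K(t,u)-K(s,u)\big)\,\d W_u$ one gets
\[
d_X^2(t,s)=\E\left[(X_t-X_s)^2\right]=\int_0^T\left|K(t,u)-K(s,u)\right|^2\,\d u.
\]

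For part~(i) I would fix $\epsilon>0$, take $f_\epsilon\in L^2([0,T])$ from the hypothesis, square the pointwise bound and integrate in $u$, obtaining
\[
d_X^2(t,s)\le|t-s|^{2(H-\epsilon)}\int_0^T f_\epsilon(u)^2\,\d u=\|f_\epsilon\|_{L^2([0,T])}^2\,|t-s|^{2(H-\epsilon)},
\]
that is, $d_X(t,s)\le\|f_\epsilon\|_{L^2([0,T])}\,|t-s|^{H-\epsilon}$. Since $\epsilon>0$ is arbitrary, the given kernel $K$ satisfies the bound of Corollary~\ref{cor:fredholm} (equivalently, $d_X$ satisfies (\ref{var-estimate})), and hence $X$ is H\"older continuous of every order $a<H$.

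For part~(ii), assume $X$ is H\"older continuous of every order $a<H$; by Theorem~\ref{thm:holder} there are constants $c_\epsilon$ with $d_X(t,s)\le c_\epsilon|t-s|^{H-\epsilon}$ for all $\epsilon>0$. Fix $t\in[0,T]$ and $\epsilon>0$, and pick a sequence $s_n\to t$ realising $\liminf_{s\to t}d_X^2(t,s)/|t-s|^{2H-\epsilon}$. The integrands $u\mapsto|K(t,u)-K(s_n,u)|^2/|t-s_n|^{2H-\epsilon}$ are nonnegative, so Fatou's lemma gives
\[
\int_0^T\liminf_{s\to t}\frac{\left|K(t,u)-K(s,u)\right|^2}{|t-s|^{2H-\epsilon}}\,\d u
\le\liminf_{n\to\infty}\int_0^T\frac{\left|K(t,u)-K(s_n,u)\right|^2}{|t-s_n|^{2H-\epsilon}}\,\d u
=\liminf_{s\to t}\frac{d_X^2(t,s)}{|t-s|^{2H-\epsilon}}.
\]
Now applying the bound on $d_X$ with $\epsilon/2$ in place of $\epsilon$ yields $d_X^2(t,s)\le c_{\epsilon/2}^2\,|t-s|^{2H-\epsilon}$, so the right-hand side is at most $c_{\epsilon/2}^2<\infty$; hence $f_\epsilon\in L^1([0,T])$. (To make $f_\epsilon$ manifestly measurable one may take the $\liminf$ along rational $s$.)

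The argument is essentially routine and I do not expect a genuine obstacle; the only care needed is bookkeeping with $\epsilon$ — in particular using the exponent $\epsilon/2$ in part~(ii) so that $d_X^2(t,s)/|t-s|^{2H-\epsilon}$ stays bounded as $s\to t$ — together with the harmless observation that the identity $d_X^2(t,s)=\int_0^T|K(t,u)-K(s,u)|^2\,\d u$ lets one pass freely between statements about the process and about its kernel. The interchange of $\liminf$ and the integral is the one genuinely analytic step, and it is exactly Fatou's lemma.
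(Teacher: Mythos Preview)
Your proof is correct and follows essentially the same route as the paper: part~(i) reduces to Corollary~\ref{cor:fredholm} via the It\^o isometry, and part~(ii) is Fatou's lemma combined with the kernel bound from Corollary~\ref{cor:fredholm}. The only cosmetic differences are that the paper phrases part~(ii) as a proof by contradiction and uses the bound $\int_0^T|K(t,u)-K(s,u)|^2\,\d u\le c_\epsilon|t-s|^{2H-\epsilon}$ from Corollary~\ref{cor:fredholm} directly (with exponent $2H-\epsilon$ rather than $2(H-\epsilon)$), which makes your $\epsilon/2$ bookkeeping unnecessary.
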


\begin{proof}
The first part follows from Corollary \ref{cor:fredholm}. Consider then the second part and assume that $X$ is H\"older continuous of any order $a<H$ and 
$$
\liminf_{s\to t} \frac{\left|K(t,\cdot)-K(s,\cdot)\right|^2}{|t-s|^{2H-\epsilon}} \notin L^1([0,T]).
$$
By Corollary \ref{cor:fredholm} we know that
$$
\int_0^T \frac{\left|K(t,\cdot)-K(s,\cdot)\right|^2}{|t-s|^{2H-\epsilon}}\d u \leq c_\epsilon.
$$
On the other hand, by Fatou Lemma we have
$$
\liminf_{s\to t} \int_0^T \frac{\left|K(t,\cdot)-K(s,\cdot)\right|^2}{|t-s|^{2H-\epsilon}}\d u \geq \int_0^T \liminf_{s\to t} \frac{\left|K(t,\cdot)-K(s,\cdot)\right|^2}{|t-s|^{2H-\epsilon}}\d u = \infty
$$
which is a contradiction.
\end{proof}

\subsection*{Volterra Processes}

A Fredholm process is a \emph{Volterra process} if its kernel $K$ satisfies $K(t,s)=0$ if $s>t$. In this case Corollary \ref{cor:fredholm} becomes:

\begin{cor}\label{cor:volterra}
A Gaussian Volterra process $X$ with kernel $K$ is H\"older continuous of any order $a<H$ is and only if, for all $s<t$ and $\epsilon>0$
\begin{enumerate}
\item
$\int_s^t K(t,u)^2\, \d u \le c_\epsilon |t-s|^{2H-\epsilon}$, 
\item
$\int_0^s |K(t,u)-K(s,u)|^2\, \d u \le c_\epsilon |t-s|^{2H-\epsilon}$. 
\end{enumerate}
\end{cor}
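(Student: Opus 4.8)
The plan is to deduce the corollary directly from Corollary~\ref{cor:fredholm} by using the Volterra property $K(t,u)=0$ for $u>t$ to split the $L^2$-distance between the kernel slices $K(t,\cdot)$ and $K(s,\cdot)$ into exactly the two pieces appearing in (i) and (ii).

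First I would fix $0\le s<t\le T$ and decompose the domain of integration in Corollary~\ref{cor:fredholm} as $[0,T]=[0,s]\cup(s,t]\cup(t,T]$. On $(t,T]$ both $K(t,u)$ and $K(s,u)$ vanish, so that piece contributes nothing; on $(s,t]$ we have $u>s$, hence $K(s,u)=0$, so the integrand collapses to $K(t,u)^2$. This yields the identity
$$
\int_0^T |K(t,u)-K(s,u)|^2\,\d u
=\int_0^s |K(t,u)-K(s,u)|^2\,\d u+\int_s^t K(t,u)^2\,\d u ,
$$
with both summands on the right nonnegative.

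From here the two implications are immediate. For the only-if part, if $X$ is H\"older continuous of every order $a<H$, then Corollary~\ref{cor:fredholm} bounds the left-hand side above by $c_\epsilon|t-s|^{2H-\epsilon}$; since the two summands are nonnegative, each is individually bounded by $c_\epsilon|t-s|^{2H-\epsilon}$, which is precisely (i) and (ii). Conversely, if (i) and (ii) hold, the displayed identity gives $\int_0^T |K(t,u)-K(s,u)|^2\,\d u\le 2c_\epsilon|t-s|^{2H-\epsilon}$, and Corollary~\ref{cor:fredholm} then delivers the H\"older continuity.

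There is essentially no analytic obstacle here; the statement is a bookkeeping consequence of Corollary~\ref{cor:fredholm}. The only two points deserving a word of care are that the splitting must be applied to the representing kernel $K$ supplied by Corollary~\ref{cor:fredholm} (not to an arbitrary kernel), and that the constants $c_\epsilon$ are harmlessly rescaled by a factor $2$ when passing between the single-integral and the two-integral formulations.
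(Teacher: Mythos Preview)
Your proof is correct and matches the paper's own treatment: the paper does not give a separate proof of Corollary~\ref{cor:volterra} but simply states that it is what Corollary~\ref{cor:fredholm} ``becomes'' under the Volterra condition $K(t,s)=0$ for $s>t$, i.e.\ exactly the splitting you carry out. Your remarks about nonnegativity of the summands and the harmless factor~$2$ in the constants are the only details worth noting, and they are handled correctly.
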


By \cite[p. 779]{Alos} the following is a sufficient condition:

\begin{pro}
Let $X$ be a Gaussian Volterra process with kernel $K$ that satisfies
\begin{enumerate} 
\item $\int_s^t K(t,u)^2\, \d u \leq c(t-s)^{2H},$
\item $K(t,s)$ is differentiable in $t$ and $\left|\frac{\partial K}{\partial t}(t,s)\right| \leq c(t-s)^{H- \frac 32}.$
\end{enumerate}
Then  $X$ is H\"older continuous of any order $a<H$.
\end{pro}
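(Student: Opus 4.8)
The plan is to verify the two conditions of Corollary \ref{cor:volterra} for the kernel $K$. Part (i) of that corollary is immediate from the first assumption: since $t-s\le T$, we get $\int_s^t K(t,u)^2\,\d u\le c(t-s)^{2H}\le cT^{\eps}(t-s)^{2H-\eps}$ for every $\eps>0$. So everything reduces to part (ii), i.e.\ to the estimate $\int_0^s|K(t,u)-K(s,u)|^2\,\d u\le c_\eps(t-s)^{2H-\eps}$.

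For part (ii) I would fix $0\le u<s<t\le T$ and write $K(t,u)-K(s,u)=\int_s^t\frac{\p K}{\p v}(v,u)\,\d v$, which is legitimate because $v\mapsto K(v,u)$ is differentiable on $[s,t]$ with, by the second assumption, bounded derivative there (the interval $[s,t]$ avoids the singularity at $v=u$). The second assumption then gives $|K(t,u)-K(s,u)|\le c\int_s^t(v-u)^{H-3/2}\,\d v$, so, writing $a:=t-s$ and substituting $b:=s-u$,
$$
\int_0^s|K(t,u)-K(s,u)|^2\,\d u\le c^2\int_0^s\left(\int_b^{b+a}r^{H-3/2}\,\d r\right)^2\d b\le c^2\int_0^\infty\left(\int_b^{b+a}r^{H-3/2}\,\d r\right)^2\d b.
$$
The last integral scales under $b=a\beta$, $r=a\rho$ and equals $a^{2H}I_H$, where $I_H:=\int_0^\infty\left(\int_\beta^{\beta+1}\rho^{H-3/2}\,\d\rho\right)^2\d\beta$.

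It remains to see that $I_H<\infty$ for $0<H<1$. Near the origin the inner integral is at most a constant times $1+\beta^{H-1/2}$ (with an extra $\log(1/\beta)$ factor when $H=1/2$), so the integrand is integrable on $(0,1)$ since $2H-1>-1$; near infinity the inner integral is $O(\beta^{H-3/2})$ by the mean value theorem, so the integrand is $O(\beta^{2H-3})$ and integrable on $(1,\infty)$ since $H<1$. Hence $\int_0^s|K(t,u)-K(s,u)|^2\,\d u\le C(t-s)^{2H}\le CT^{\eps}(t-s)^{2H-\eps}$, which is part (ii), and the proposition follows from Corollary \ref{cor:volterra}. (Should the degenerate value $H=1$ be admitted as well, $I_1$ diverges logarithmically at infinity; one then instead splits the $b$-integral at $b=a$ and bounds the tail by $c^2\int_a^{s}a^2b^{-1}\,\d b\le c^2a^2\log(T/a)$, and here the $\eps$-gap is genuinely needed, via $a^2\log(T/a)\le C_\eps a^{2-\eps}$.)

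The only real obstacle is the finiteness of the scaling constant $I_H$, that is, the endpoint analysis at $\beta=0$ and $\beta=\infty$; everything else is the fundamental theorem of calculus together with a change of variables. Note in particular that for $0<H<1$ no $\eps$-gap is needed in this argument — it enters only in the borderline case $H=1$.
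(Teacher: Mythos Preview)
Your argument is correct. The paper itself does not give a proof of this proposition; it simply records it as a sufficient condition taken from \cite[p.~779]{Alos}. Your direct verification of the two conditions in Corollary~\ref{cor:volterra} is exactly the natural route, and the computation is sound: the fundamental theorem of calculus together with the derivative bound, the change of variables $b=s-u$, and the self-similar rescaling $b=a\beta$ reduce everything to the finiteness of $I_H$, whose endpoint analysis you carry out correctly (integrable near $0$ because $2H-1>-1$, integrable near $\infty$ because $2H-3<-1$).

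Two minor remarks. First, your observation that for $0<H<1$ one actually gets the sharp bound $C(t-s)^{2H}$ without an $\eps$-loss is a nice bonus that the paper does not mention. Second, the parenthetical treatment of $H=1$ is not needed for the proposition as stated (the paper implicitly works with $H\in(0,1)$), but your handling of it is also fine.
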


\subsection*{Self-Similar Processes} 

A process $X$ is \emph{self-similar} with index $\beta>0$ if
$$
(X_{at})_{0 \leq t \leq T/a}\overset{d}{=} (a^\beta X_t)_{0 \leq t \leq T},\quad  \mbox{for all }  a>0.
$$
In the Gaussian case this means that
$$
d_X(t,s) = a^{-\beta}d_X(at,as) \quad \mbox{ for all } a>0.
$$
So, it is clear that $X$ cannot be H\"older continuous of order $H>\beta$.

Let $\mathcal{H}_t^X$ be the closed linear subspace of $L^2(\Omega)$ generated by the Gaussian random variables $\{ X_s; s\le t\}$. Denote $\mathcal{H}_{0+}^X := \cap_{t\in(0,T]} \mathcal{H}_t^X$. Then $X$ is \emph{purely non-deterministic} if $\mathcal{H}_{0+}^X$ is trivial. By \cite{Yazigi} a purely non-deterministic Gaussian self-similar process admits the representation
\begin{equation}\label{rep:volterra}
X_t =  \int_0^t t^{\beta - \frac 12}F\left(\frac{u}{t}\right)\ud W_u,
\end{equation}
where $F \in L^2([0,1])$ is positive. Consequently: 

\begin{cor}\label{cor:ss-pnd}
Let $X$ be a purely non-deterministic  Gaussian self-similar process with index $\beta$ and representation \eqref{rep:volterra}. Then $X$ is H\"older continuous of any order $a<H$ if and only if
\begin{enumerate}
\item
$\int_s^t t^{2\beta-1}F(\frac{u}{t})^2\, \d u \le c_\epsilon|t-s|^{2H-\epsilon}$, 
\item
$\int_0^s \left|t^{\beta -\frac 12}F\left(\frac{u}{t}\right)-s^{\beta -\frac 12}F\left(\frac{u}{s}\right)\right|^2\, \d u \le c_\epsilon |t-s|^{2H-\epsilon}$ 
\end{enumerate}
for all $s<t$ and $\epsilon>0$.
\end{cor}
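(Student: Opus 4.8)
The plan is to recognize the representation \eqref{rep:volterra} as a Gaussian Volterra representation and then invoke Corollary \ref{cor:volterra} directly. Concretely, one reads off the Volterra kernel
\[
K(t,u) = t^{\beta-\frac12}\, F\!\left(\frac{u}{t}\right)\mathbf{1}_{\{0\le u\le t\}},
\]
and substitutes this $K$ into the two conditions of Corollary \ref{cor:volterra}. Condition (i) there, $\int_s^t K(t,u)^2\,\d u \le c_\epsilon|t-s|^{2H-\epsilon}$, becomes verbatim condition (i) of the present corollary, and condition (ii), $\int_0^s|K(t,u)-K(s,u)|^2\,\d u\le c_\epsilon|t-s|^{2H-\epsilon}$, becomes verbatim condition (ii). Hence, once Corollary \ref{cor:volterra} is applicable, the claimed equivalence is immediate.

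The one point that must be checked before applying Corollary \ref{cor:volterra} is that $K$ is a legitimate Volterra kernel, i.e.\ that $K\in L^2([0,T]^2)$, this being the standing hypothesis under which the Fredholm/Volterra framework of Corollary \ref{cor:fredholm} and Corollary \ref{cor:volterra} is stated. This follows from a one-line change of variables: putting $v=u/t$,
\[
\int_0^T\!\!\int_0^T K(t,u)^2\,\d u\,\d t
= \int_0^T\!\!\int_0^t t^{2\beta-1}F\!\left(\tfrac{u}{t}\right)^2\d u\,\d t
= \int_0^T t^{2\beta}\,\d t\int_0^1 F(v)^2\,\d v
= \frac{T^{2\beta+1}}{2\beta+1}\,\|F\|_{L^2([0,1])}^2,
\]
which is finite because $\beta>0$ and $F\in L^2([0,1])$. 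The hypothesis that $X$ is purely non-deterministic is used only to produce the representation \eqref{rep:volterra} itself, via \cite{Yazigi}; here it is part of the assumptions, so nothing further is needed on that account.

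There is essentially no obstacle to overcome: the mathematical content of the statement is entirely contained in Corollary \ref{cor:volterra}, and the present corollary is merely its specialization to the self-similar kernel $t^{\beta-1/2}F(u/t)$. The only care needed is the bookkeeping in the change of variables above and the observation that the indicator $\mathbf{1}_{\{u\le t\}}$ makes $K$ genuinely of Volterra type, so that part (i) indeed involves only the integral over $[s,t]$ and part (ii) only the integral over $[0,s]$.
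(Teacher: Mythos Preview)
Your proposal is correct and matches the paper's approach: the paper states this corollary without proof, treating it as an immediate specialization of Corollary~\ref{cor:volterra} to the Volterra kernel $K(t,u)=t^{\beta-1/2}F(u/t)$ arising from the representation~\eqref{rep:volterra}. Your additional verification that $K\in L^2([0,T]^2)$ is a reasonable piece of housekeeping, though the paper does not bother to spell it out.
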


\begin{pro}
Let $X$ be a purely non-deterministic  Gaussian self-similar process with index $\beta$ and representation \eqref{rep:volterra}. Then $X$ is H\"older continuous of any order $a<H$ if 
\begin{enumerate}
\item $F\left(x\right) \leq c \,x^{\beta-H}(1-x)^{H- \frac 12},$ \;\; $0<x<1 ,$
\item $\left|1-\frac{F(x)}{F(y)}\right|\leq \left| \left(\frac yx\right)^{H-\beta}\left(\frac{1-x}{1-y}\right)^{H- \frac 12}-1\right|,$\;\; $0<y<x<1,$
\end{enumerate}
\end{pro}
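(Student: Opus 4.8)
The plan is to verify conditions (i) and (ii) of Corollary~\ref{cor:ss-pnd} for the representation \eqref{rep:volterra}; since, as observed just before that corollary, $X$ cannot be H\"older of order $H>\beta$, the statement has content only when $H\le\beta$, which I assume throughout (so that $2\beta-2H\ge0$). It is convenient to put $g(x):=x^{\beta-H}(1-x)^{H-\frac12}$ and to record the elementary identity $t^{\beta-\frac12}g(u/t)=u^{\beta-H}(t-u)^{H-\frac12}$ for $0<u<t$. In this notation, substituting $x=u/t$ in hypothesis~(i) and multiplying by $t^{\beta-\frac12}$ gives
$$t^{\beta-\frac12}F(u/t)\le c\,u^{\beta-H}(t-u)^{H-\frac12},\qquad 0<u<t,$$
while, since the right-hand side of hypothesis~(ii) equals $\bigl|g(x)/g(y)-1\bigr|$, hypothesis~(ii) says precisely $\bigl|1-F(x)/F(y)\bigr|\le\bigl|g(x)/g(y)-1\bigr|$ for $0<y<x<1$.

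Condition~(i) of Corollary~\ref{cor:ss-pnd} is then immediate: using the displayed bound on $t^{\beta-\frac12}F(u/t)$, $u\le t$ and $2\beta-2H\ge0$,
$$\int_s^t t^{2\beta-1}F(u/t)^2\,\d u\le c^2\!\int_s^t u^{2\beta-2H}(t-u)^{2H-1}\,\d u\le\frac{c^2}{2H}\,t^{2\beta-2H}(t-s)^{2H}\le c_\epsilon|t-s|^{2H-\epsilon}.$$
For condition~(ii) I would first establish, for $0<u<s<t$, the pointwise increment bound
$$\bigl|t^{\beta-\frac12}F(u/t)-s^{\beta-\frac12}F(u/s)\bigr|\le 2c\,u^{\beta-H}(t-u)^{H-\frac12}\bigl|1-(s/t)^{\beta-\frac12}\bigr|+c\,u^{\beta-H}\bigl|(t-u)^{H-\frac12}-(s-u)^{H-\frac12}\bigr|.$$
This follows from the factorisation $t^{\beta-\frac12}F(u/t)-s^{\beta-\frac12}F(u/s)=t^{\beta-\frac12}F(u/t)\bigl[1-(s/t)^{\beta-\frac12}\rho\bigr]$ with $\rho=F(u/s)/F(u/t)$, by applying $|1-q\rho|\le|1-q|+q|1-\rho|$, then hypothesis~(ii) with $x=u/s,\ y=u/t$ (so $y/x=s/t$ and $(1-x)/(1-y)=t(s-u)/(s(t-u))$), then the algebraic identity $(s/t)^{\beta-\frac12}(s/t)^{H-\beta}\bigl(t(s-u)/(s(t-u))\bigr)^{H-\frac12}=\bigl((s-u)/(t-u)\bigr)^{H-\frac12}$, then the bound on $t^{\beta-\frac12}F(u/t)$ above, and finally splitting off $(t-u)^{H-\frac12}\bigl|1-(s/t)^{\beta-\frac12}\bigr|$ once more with the triangle inequality.

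Squaring this bound, integrating over $u\in(0,s)$ and using $(a+b)^2\le2a^2+2b^2$ reduces condition~(ii) to two estimates. The first, $\int_0^s u^{2\beta-2H}\bigl|(t-u)^{H-\frac12}-(s-u)^{H-\frac12}\bigr|^2\,\d u$, is the classical fractional-increment integral: the substitution $w=s-u$ and $(s-w)^{2\beta-2H}\le T^{2\beta-2H}$ bound it by $T^{2\beta-2H}\!\int_0^\infty[(w+(t-s))^{H-\frac12}-w^{H-\frac12}]^2\,\d w=T^{2\beta-2H}C_H(t-s)^{2H}$ with $C_H=\int_0^\infty[(1+z)^{H-\frac12}-z^{H-\frac12}]^2\,\d z<\infty$ for $H<1$ (for $H=1$ one gets a logarithm absorbed by the $\epsilon$-gap). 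The second estimate, $\bigl|1-(s/t)^{\beta-\frac12}\bigr|^2\int_0^s u^{2\beta-2H}(t-u)^{2H-1}\,\d u\le c_\epsilon|t-s|^{2H-\epsilon}$, is where the real work lies, and I expect it to be the main obstacle: the prefactor $\bigl|1-(s/t)^{\beta-\frac12}\bigr|$ is only an artifact of the triangle inequality, and it must be balanced against the smallness of the integral when $s\ll t$ (and against its own singularity at $s/t\to0$ when $\beta<\tfrac12$), so a crude bound on the integral by its value over $[0,t]$ is too lossy. Instead one uses $\int_0^s u^{2\beta-2H}(t-u)^{2H-1}\,\d u\le\bigl(\sup_{0\le u\le s}(t-u)^{2H-1}\bigr)\frac{s^{2\beta-2H+1}}{2\beta-2H+1}$ (here $2\beta-2H+1>0$), where the supremum is $t^{2H-1}$ if $H\ge\tfrac12$ and $(t-s)^{2H-1}$ if $H<\tfrac12$; after substituting $s=rt$ and cancelling powers of $t$ against $(t-s)^{2H-\epsilon}=t^{2H-\epsilon}(1-r)^{2H-\epsilon}$ (with $t^{2\beta-2H+\epsilon}\le T^{2\beta-2H+\epsilon}$), the inequality reduces to the boundedness on $(0,1)$ of an explicit elementary function of $r$, which holds because $\beta\ge H$ and $H\le1$: the factor $r^{2\beta-2H+1}$ kills the singularity of $(s/t)^{\beta-\frac12}$ at $r=0$, while $\bigl|1-r^{\beta-\frac12}\bigr|^2=O\bigl((1-r)^2\bigr)$ beats $(1-r)^{2H-\epsilon}$ near $r=1$. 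Granting these two estimates, condition~(ii) of Corollary~\ref{cor:ss-pnd} holds, and the Proposition follows.
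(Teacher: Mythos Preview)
Your approach is correct and is essentially the same as the paper's: verify the two conditions of Corollary~\ref{cor:volterra} (equivalently Corollary~\ref{cor:ss-pnd}) by decomposing the kernel increment $t^{\beta-\frac12}F(u/t)-s^{\beta-\frac12}F(u/s)$ via the triangle inequality, applying hypothesis~(i) to bound the prefactor and hypothesis~(ii) to bound the ratio $F(u/s)/F(u/t)$, and then integrating. The paper writes the split as
\[
\bigl|t^{\beta-\frac12}F(u/t)-s^{\beta-\frac12}F(u/s)\bigr|
\le F(u/t)\,t^{\beta-\frac12}\Bigl|1-\tfrac{F(u/s)}{F(u/t)}\Bigr|
+F(u/s)\bigl|t^{\beta-\frac12}-s^{\beta-\frac12}\bigr|
\]
and then declares ``the details are left to the reader''; your factorisation $t^{\beta-\frac12}F(u/t)\bigl[1-(s/t)^{\beta-\frac12}\rho\bigr]$ followed by the algebraic identity is a minor (and in fact slightly cleaner) variant of the same split, and you actually carry out the omitted integration, so your proposal is more detailed than the paper's own proof.
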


\begin{proof} Condition (i) of Corollary \ref{cor:volterra} follows from assumption (i) and condition (ii) of Corollary \ref{cor:volterra} follows from assumption (i) and (ii) applied to the estimate
\begin{eqnarray*}
\lefteqn{\left|t^{\beta- \frac 12}F\left(\frac ut\right)-s^{\beta-\frac 12}F\left(\frac us\right)\right|} \\
&\le&
F\left(\frac ut\right) \,t^{\beta - \frac 12}\left|1-\frac{F\left(\frac us\right)}{F\left(\frac ut\right)}\right|+F\left(\frac us\right)\left|t^{\beta- \frac 12}-s^{\beta- \frac 12}\right|.
\end{eqnarray*}
The details are left to the reader.
\end{proof}



\begin{thebibliography}{10}

\bibitem{Alos}
{Al\`{o}s, E., Mazet, O. and Nualart, D.}
Stochastic calculus with respect to Gaussian processes.
\emph{Ann. Probab.} \textbf{29}, 766--801, 2001.

\bibitem{Arcones}
{Arcones, M. A.}
On the law of the iterated logarithm for Gaussian processes.
\emph{J. Theor. Prob.} \textbf{8}, no. 4, 877--904, 1995.

\bibitem{Dudley1} 
{Dudley, R. M.}
The sizes of compact subsets of Hilbert space and continuity of Gaussian processes. 
\emph{J. Functional Analysis} \textbf{1}, 290-330, 1967.

\bibitem{Dudley2}
{Dudley, R. M.}
Sample functions of the Gaussian process. 
\emph{Ann. Probability} \textbf{1}, no. 1, 66--103, 1973.

\bibitem{Fernique}
{Fernique, X.} 
Continuit\'e des processus Gaussiens.  
\emph{C. R. Acad. Sci. Paris} \textbf{258}, 6058--6060, 1964.

\bibitem{Fernique2}
{Fernique, X.}
Caract\'erisation de processus \`a trajectoires major\'ees ou continues.
\emph{S\'eminaire de probabilit\'es} \textbf{XII}, 691--706, 1978.

\bibitem{Garsia-Rademich-Rumsey}
{Garsia, A., Rademich, E. and Rumsey, H}. 
A real variable lemma and the continuity of paths of some Gaussian processes. 
\emph{Indiana Univ. Math. Journal}, \textbf{20}, 565--578,
1970/71.

\bibitem{marcus-rosen}
{Marcus, M. B. and Rosen, J.}
\emph{Markov processes, Gaussian processes, and local times.}
Cambridge Studies in Advanced Mathematics \textbf{100}. Cambridge University Press, Cambridge, 2006. 

\bibitem{Marcus-Shepp}
{Marcus, M. B. and Shepp, L. A.}
Continuity of Gaussian processes. 
\emph{Trans. Amer. Math. Soc.} \textbf{151}, 377--391, 1970.

\bibitem{talagrand}
{Talagrand, M.}  
Continuity of Gaussian processes. 
\emph{Acta Math.} \textbf{159}, 99--149, 1987. 

\bibitem{Yazigi}
{Yazigi, A.} 
Representation of self-similar Gaussian processes.
Preprint, arXiv:1401.3236, 2014.

\end{thebibliography}
\end{document}